\def\Spec{\mathrm{Spec}\textrm{ }}
\def\Gal{\mathrm{Gal}}
\newcommand{\Q}{\mathbb Q}
\newcommand{\R}{\mathbb R}
\newcommand{\op}{\operatorname}
\theoremstyle{plain}
\numberwithin{equation}{section}
\newtheorem{thm}{Theorem}[section]
\newtheorem{theorem}[thm]{Theorem}
\newtheorem{corollary}{Corollary}
\theoremstyle{definition}
\begin{document}

\setcounter{page}{1}

\title[Class numbers of large degree nonabelian number fields]{Class numbers of large degree nonabelian number fields}

\author{Kwang-Seob Kim}
\address{School of Mathematics\\
                Korea Institute for Advanced Study(KIAS)\\
                Seoul, 130-722, Korea}
\email{kwang12@kias.re.kr}
\author{John C. Miller}
\address{Department of Applied Mathematics \& Statistics\\
                Johns Hopkins University\\
                100 Whitehead Hall\\
                3400 North Charles Street\\
                Baltimore, MD 21218}
\email{jmill268@jhu.edu}
%\thanks{This material is based upon work supported by the National Science Foundation under Grant No. DUE-1022574.}

\subjclass[2010]{11R29 (Primary); 11Y40 (Secondary)}

\begin{abstract}
If a number field has a large degree and discriminant, the computation of the class number becomes quite difficult, especially without the assumption of GRH.  In this article, we will unconditionally show that a certain nonabelian number field of degree 120 has class number one.  This field is the unique $A_5 \times C_2$ extension of the rationals that is ramified only at 653 with ramification index 2.  It is the largest degree number field unconditionally proven to have class number 1.

The proof uses the algorithm of Gu\`ardia, Montes and Nart to calculate an integral basis and then finds integral elements of small prime power norm to establish an upper bound for the class number; further algebraic arguments prove the class number is 1.  It is possible to apply these techniques to other nonabelian number fields as well.
\end{abstract}

\maketitle

\section{Introduction}
One of the problems of algebraic number theory is to get deep knowledge of the Galois groups of various Galois extensions of number fields, especially maximal extensions of number fields with restricted ramification. Such Galois groups can be regarded as \'etale fundamental groups of spectra of algebraic integer rings punctured at some closed points, and they play essential roles for understanding the arithmetic of number fields, analogous to the role geometric fundamental groups of manifolds do in geometry. In other words, the \'etale fundamental group $G= \pi^{\textrm{\'et}}_1(\Spec \mathcal{O}_K)$ of the ring of integers $\mathcal{O}_K$ of a number field $K$ is isomorphic to $\Gal(K^{f}_{ur}/K)$ where $K^f_{ur}$ is the maximal extension of $K$ which is unramified over all finite places. This is one of the motivations for the study of unramified extensions of number fields and their Galois groups. In general, one can know the abelianizations of the  \'etale fundamental group $G$ by examining the ideal class group of $\mathcal{O}_K$.  Yamamura's results \cite{Yamamura-1997} tell us that $K^f_{ur}=K_l$, where $K$'s are imaginary quadratic fields with absolute discriminant $|d_K| \leq 420$ and $K_l$ is the top of the class field tower of $K$. Hence we can find examples of abelian or solvable \'etale fundamental groups.

However, one can get little information on the structure of $G$ itself via class field theory, especially in the case where $G$ is a nonabelian simple group.

The first author has previously \cite{Kim-2014} given an example of $K$ with $\Gal(K^{f}_{ur}/K) \simeq A_5$ under the assumption of the generalized Riemann hypothesis (GRH). Let $K$ be a quadratic number field $\Q(\sqrt{653})$ and $L$ be a splitting field of
\begin{equation}
\begin{split} \label{poly1}
x^5 +3x^3 +6x^2 + 2x + 1,
\end{split}
\end{equation}
a polynomial that has complex roots. The field $L$ is an $A_5$-extension of $\Q$ and $653$ is the only prime ramified in this field, with ramification index two \cite[p. 21]{Cais-2005}.  Moreover, $L$ is the unique such field \cite{Basmaji-1994}.

By Abhyankar's lemma, the compositum $KL$ is an extension of $K$ which is unramified over all finite places, and $KL$ is the unique $A_5 \times C_2$ extension of $\Q$ ramified only at 653 with ramification index 2.  Under the assumption of GRH, the first author proved \cite{Kim-2014} that the class number of $KL$ is 1 and futhermore that $K^f_{ur}= KL$.

We have the following natural question: Is it possible to show that $K^f_{ur}=KL$ without assuming GRH, thereby proving unconditionally the existence of a non-solvable \'etale fundamental group?

To do this, the first step is to unconditionally prove that $KL$ has class number one. If a number field has a large degree and discriminant, the computation of the class number becomes quite difficult.  The Minkowski bound is too large to be useful, and the root discriminant of $KL$ is too large to be treated by Odlyzko's unconditional discriminant bounds.  However, the second author showed \cite{MillerWeber} that by finding nontrivial lower bounds for sums over prime ideals of the Hilbert class field, upper bounds can be established for class numbers of fields of larger discriminant.

In this paper, we unconditionally prove that $KL$ has class number 1.  In fact, $KL$ has degree 120, and it is the largest degree number field proven unconditionally to have class number 1.  Previously, among such fields, the one with largest degree, the real cyclotomic field of conductor 151, has degree 75 \cite{MillerPrime}. 

\begin{theorem}\label{MainResult}
The unique $A_5 \times C_2$ extension of $\Q$ ramified only at 653 with ramification index 2 has class number 1.
\end{theorem}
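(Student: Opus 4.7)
The plan is to split the proof into an analytic upper bound step and a finishing algebraic step. First, I would construct an explicit integral basis for $\O_{KL}$ using the Gu\`ardia--Montes--Nart algorithm, starting from a defining polynomial obtained from the polynomial \eqref{poly1} together with $\sqrt{653}$, and building up the degree-120 field $KL$. An explicit integral basis is essential for everything that follows: it lets us compute norms of arbitrary elements, decompose small rational primes into prime ideals of $\O_{KL}$, and verify that candidate principal ideals are actually principal.

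Next, to obtain an upper bound on $h_{KL}$ I would follow the strategy from \cite{MillerWeber}. The point is that although the Minkowski and Odlyzko unconditional discriminant bounds are hopelessly large in degree $120$ with this root discriminant, one can bound $h_{KL}$ from above by exploiting the Dedekind zeta function of the Hilbert class field $H$ of $KL$. Truncating the Euler product and using positivity of the associated Dirichlet series coefficients yields an explicit lower bound for a weighted prime sum $\sum_{\P} f(N\P)$ over prime ideals of $H$. Because the number of prime ideals of $H$ above a principal prime of $\O_{KL}$ is exactly $h_{KL}$ times the splitting degree in $KL/\Q$, exhibiting many small \emph{principal} prime ideals of $\O_{KL}$ translates directly into an upper bound on $h_{KL}$.

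The key input, and the main computational obstacle, is the search for integral elements of small prime power norm in $\O_{KL}$. For each small rational prime $p$ whose factorization in $KL$ one has computed, one wants to exhibit $\alpha \in \O_{KL}$ with $N_{KL/\Q}(\alpha)$ equal to a small power of $p$, which certifies principality of the corresponding prime ideals. Because $[KL:\Q]=120$, a naive search over a box in the integral basis is infeasible, so I would leverage the tower $\Q \subset K \subset KL$ and $\Q \subset L \subset KL$: any principal ideal in the smaller subfields $K$, $L$, or their intermediate pieces extends to a principal ideal in $KL$, so one can import generators from subfields whose arithmetic is far cheaper. Iterating this, together with a targeted search in $\O_{KL}$ itself, should produce enough principal primes to drive the Miller-style bound down to a very small number, ideally already forcing $h_{KL}=1$.

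Finally, should the analytic bound only narrow $h_{KL}$ to a short list of candidate values, an algebraic argument would close the gap. Since $KL/K$ is unramified everywhere (by Abhyankar's lemma, as noted in the introduction), any unramified abelian extension of $KL$ pulls back to an unramified extension of $K = \Q(\sqrt{653})$, whose class number is known and small; combined with the action of $\Gal(KL/K) \simeq A_5$ on the class group of $KL$ and the structure of $A_5$-modules, this should rule out any nontrivial prime divisors of $h_{KL}$ left over from the analytic step. The hardest part of the plan is unquestionably the norm search: squeezing enough small principal primes out of a degree-120 field to overcome the analytic bound, while keeping the computations tractable, is where essentially all of the work lies.
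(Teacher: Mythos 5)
Your overall strategy matches the paper: compute an integral basis via Gu\`ardia--Montes--Nart, certify many principal prime ideals of small norm, feed them into a Miller-style explicit-formula bound via the Hilbert class field of $KL$, and finish with the algebraic argument from \cite{Kim-2014}. But there are two genuine gaps in the middle that, as written, would prevent the plan from succeeding.

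First, your proposal for producing small-norm elements---``import generators from subfields'' plus ``a targeted search in $\O_{KL}$''---skips the step that actually makes the search tractable. The Montes basis as returned is catastrophically bad for norm searches: the paper finds basis elements with Euclidean length up to roughly $1.59 \times 10^{70}$ after embedding into $\R^{120}$, so a sparse-coefficient search over that basis produces elements of astronomical norm. The essential missing ingredient is aggressive lattice basis reduction (here: LLL applied iteratively after re-sorting by length, hundreds of times) to get all $120$ basis vectors down to Euclidean length about $15$; only then does a sparse search produce the prime-norm elements in the tables. Moreover, the subfield idea does not by itself certify principality of primes of $KL$: if $\p$ is a principal prime of $K$ (or $L$) lying below a prime $\P$ of $KL$, the extended ideal $\p\O_{KL}$ is a \emph{product} of conjugate primes $\P_i$, and knowing that product is principal says nothing about the ideal class of an individual $\P_i$ unless the residue degree data forces the product to be a single prime. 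So lifting from $K$ or $L$ does not replace the direct search.

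Second, the analytic bound you invoke is not quite the one in \cite{MillerWeber}: that result is stated for totally real fields, whereas $KL$ is totally complex. The paper has to prove a new variant (Theorem~\ref{UpperBound}) adapting Poitou's form of the explicit formula to $r_1=0$, with a specific test function $F(x)=e^{-(x/c)^2}/\cosh(x/2)$ chosen so that $\op{Re}\,\Phi(s)\ge 0$ on the critical strip. Your description via ``truncating the Euler product and positivity of coefficients'' is closer in spirit to a different class-number bound and glosses over the need to control the zero sum; the positivity argument for the zero contribution is what makes the unconditional bound work here. Finally, on the algebraic closing step, your sketch points in the right direction but omits the crucial quantitative threshold: the argument of \cite{Kim-2014} (Corollary~\ref{Cor}) requires that $h_{KL}<16$, because it relies on the fact that $\op{Aut}$ of an abelian group of order less than $16$ contains no copy of $A_5$, forcing $\Gal(M^{(q)}/K)$ to be a central extension, and then analyzing the odd-$q$ and $q=2$ cases separately using that $K$ has narrow class number $1$. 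Without naming that threshold, there is no target for the analytic bound to hit, and the paper's calculation lands just under it at $h_{KL}\le 14$.
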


\section{Remark on the maximal unramified extension of $\mathbb{Q}(\sqrt{p})$}
The first author proved \cite{Kim-2014} that if the class number of $KL$ is less than 16, then the class number of $KL$ is 1.  Using discriminant lower bounds, it was further proved, \emph{under the assumption of GRH}, that the class number of $KL$ is indeed less than 16 (and therefore is 1), and also that the maximal unramified extension of $K$ is $KL$.

\begin{thm}(Main theorem of \cite{Kim-2014})\label{KimMainTheorem}
Let $K$ be a real quadratic field $\Q(\sqrt{p})$ with narrow class number $1$, where $p$ is a prime congruent $1$ mod $4$ and $p \neq 5$. Suppose that there exists a totally imaginary $A_5$-extension $L$ over $\Q$ and $p$ is the only prime ramified in this field with ramification index $2$. If $\sqrt{p}<B(1920,0,960)$, then the class number of $KL$ is one and $K^f_{ur}=KL.$

($B(n, r_1, r_2)$ is defined as an infimum of $|d_F|^{1/n_F}$ over all number fields $F$ satisfying $n_F \geq n$ and $\frac{r_1(F)}{n_F}=\frac{r_1}{n}$ (resp. $\frac{r_2(F)}{n_F}=\frac{r_2}{n}$) where $r_1(F)$ (resp. $r_2(F)$) is the number of real (resp. complex) places of a number field $F$.)
\end{thm}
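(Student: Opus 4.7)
The plan is to combine a careful computation of the root discriminant of $KL$ with the unconditional discriminant bound $B(1920,0,960)$ to force the class number to be small, and then to eliminate the remaining possibilities via a Hilbert class field argument. First I would verify that $KL/K$ is unramified at all finite primes: only $p$ can ramify in $KL/\Q$, and by Abhyankar's lemma the tame ramification index at each prime above $p$ in $KL$ is $\mathrm{lcm}(e_{K/\Q},e_{L/\Q})=\mathrm{lcm}(2,2)=2$, which already equals $e_{K/\Q}$, so $KL/K$ is unramified. Since $L$ is totally imaginary of degree $60$ and $K$ is real, $KL$ is totally imaginary of degree $120$ with signature $(0,60)$. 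Tame ramification gives $v_{\mathfrak{p}}(\mathfrak{d}_{KL/\Q})=e-1=1$ at each ramified prime, and a Galois count then yields $|d_{KL}|=p^{120/2}=p^{60}$, so the root discriminant of $KL$ equals $\sqrt{p}$.

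Next I would establish $h(KL)=1$. Suppose for contradiction that $h(KL)\geq 16$, and let $H$ be the Hilbert class field of $KL$. Then $H/KL$ is unramified, so the root discriminant of $H$ is again $\sqrt{p}$; moreover $[H:\Q]=120\cdot h(KL)\geq 1920$, and since $KL$ has no real places, $H$ is totally imaginary with signature $(0,[H:\Q]/2)$, giving $r_2(H)\geq 960$ and $r_1(H)/[H:\Q]=0$. By the infimum definition of $B(1920,0,960)$ this would force $\sqrt{p}\geq B(1920,0,960)$, contradicting the hypothesis. Hence $h(KL)<16$, and invoking the first author's earlier result mentioned at the opening of this section---namely that $h(KL)<16$ implies $h(KL)=1$---gives $h(KL)=1$.

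Finally, to conclude $K^f_{ur}=KL$, suppose the contrary and pick a finite Galois $F/KL$ with $KL\subsetneq F\subseteq K^f_{ur}$. The group $G=\Gal(F/KL)$ has abelianization $G^{\mathrm{ab}}$ corresponding to the maximal abelian subextension of $F$ over $KL$, which is unramified at all places since $KL$ is totally imaginary; because $h(KL)=1$ we must have $G^{\mathrm{ab}}=1$, so $G$ is a nontrivial perfect group, hence $|G|\geq 60$. Then $[F:\Q]\geq 7200$, and $F$ is totally imaginary with root discriminant $\sqrt{p}$, forcing $\sqrt{p}\geq B(7200,0,3600)\geq B(1920,0,960)$ by monotonicity of $B$ in $n$ for fixed signature ratios---again contradicting the hypothesis. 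The main obstacle is the cited step $h(KL)<16\Rightarrow h(KL)=1$, which rests on a separate, more delicate analysis specific to the $A_5\times C_2$ Galois structure carried out in \cite{Kim-2014}; the remaining ingredients are standard local ramification theory and the infimum definition of $B$.
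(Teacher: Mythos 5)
Your proposal is correct as far as it goes, but the comparison here is somewhat unusual: the paper does not actually prove Theorem~\ref{KimMainTheorem}. It cites it from Kim's earlier work and only sketches the proof of Corollary~\ref{Cor}, which is precisely the implication $h(KL)<16 \Rightarrow h(KL)=1$ that you explicitly black-box. So your reconstruction and the paper's presentation are complementary rather than overlapping: you supply the discriminant-bound argument (root discriminant of $KL$ equals $\sqrt{p}$ via tame ramification, Hilbert class field of degree $\geq 16\cdot 120 = 1920$ forces $\sqrt p \geq B(1920,0,960)$, hence $h<16$) and the perfect-group argument for $K^f_{ur}=KL$ (since $h=1$, any nontrivial finite Galois unramified extension of $KL$ has perfect Galois group of order $\geq 60$, pushing the degree to $\geq 7200$ and again violating the bound by monotonicity of $B$). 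These are exactly the steps the paper defers to \cite{Kim-2014}. Conversely, the one step you defer is the one the paper does sketch: for odd $q$ the group $\Gal(M^{(q)}/K)$ is a central extension of $A_5$ with a nontrivial abelian quotient, contradicting that $K$ has narrow class number $1$; for $q=2$ the group is forced to be $A_5\times C_2$ (ruled out by narrow class number $1$) or $\SL_2(\F_5)$ (ruled out by a separate computation). Your proposal correctly identifies the $1920 = 16\times\deg(KL)$ provenance, correctly uses that $KL$ is totally complex to pass from finite-place-unramified to unramified everywhere, and the tame-discriminant computation $|d_{KL}|=p^{60}$ is right. The only caution is that the missing $h<16\Rightarrow h=1$ step is not a routine black box but the heart of Kim's argument, relying heavily on the $A_5$ structure and the narrow class number hypothesis; you flag this honestly, which is appropriate.
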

In $B(1920,0,960)$, the number $1920$ represents
\begin{displaymath}
16 \times \textrm{ the degree of }KL.
\end{displaymath}

In particular, the proof of Theorem \ref{KimMainTheorem} has the following consequence:

\begin{corollary}\label{Cor}
With the notations and conditions being the same as above, if the class number of $KL$ is smaller than $16$, then the class number of $KL$ is exactly one.
\end{corollary}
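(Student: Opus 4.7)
The key idea is that the Hilbert class field $H$ of $KL$ is itself Galois over $\Q$, so $A_5\times C_2$ acts on $\Cl(KL)$ by conjugation via the exact sequence
$$1 \to \Cl(KL) \to \Gal(H/\Q) \to A_5\times C_2 \to 1.$$
The plan is to combine a group-theoretic analysis of this action with the narrow-class-number hypothesis on $K$ and an argument involving complex conjugation, so as to force $\Cl(KL)$ to be trivial whenever $|\Cl(KL)|<16$.

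First I would observe that for every abelian group $A$ of order less than $16$, the order of $\op{Aut}(A)$ is not divisible by $60$, so there is no embedding $A_5\hookrightarrow \op{Aut}(A)$. Since $A_5$ is simple, the conjugation action $A_5\to \op{Aut}(\Cl(KL))$ must be trivial, and consequently the preimage $\Gal(H/K)$ of $A_5\times\{1\}$ in $\Gal(H/\Q)$ is a \emph{central} extension of $A_5$ by $\Cl(KL)$.

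Next I would exploit the hypothesis that $K$ has narrow class number $1$. Since $KL/K$ is unramified (Abhyankar's lemma) and $H/KL$ is unramified by construction, $H/K$ is unramified at all finite places, so $\Gal(H/K)$ is a quotient of $\Gal(K^f_{ur}/K)$. The narrow class number hypothesis forces $\Gal(K^f_{ur}/K)$ to be perfect, so $\Gal(H/K)$ is perfect as well. But a perfect central extension of $A_5$ by an abelian group $A$ must factor through the universal central extension $\SL_2(\F_5)\to A_5$, whose kernel is the Schur multiplier $H_2(A_5,\Z)\cong \Z/2$; this forces $|A|\le 2$, and therefore $|\Cl(KL)|\le 2$.

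The remaining case $|\Cl(KL)|=2$ would be ruled out by complex conjugation. Under this assumption $\Gal(H/K)\cong \SL_2(\F_5)$, whose only nontrivial involution is the central element $-I$. Complex conjugation $c$ fixes the totally real field $K$, so $c\in \Gal(H/K)$; having order $2$ it must equal $-I$, whose image in $\Gal(KL/K)=A_5$ is trivial. But $L$ is totally imaginary, so $c$ acts nontrivially on $L$, yielding the required contradiction. I expect this final step to be the most delicate point, since it is where the geometric constraint imposed by complex conjugation on the perfect central extension must be used; the first two steps are essentially formal.
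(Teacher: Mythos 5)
Your argument is correct and reaches the same group-theoretic crux as the paper, but packages it more efficiently. The paper (following Kim's original proof) works with the Hilbert $q$-class field $M^{(q)}$ one prime at a time: it notes that $\op{Aut}(\Cl(KL))$ cannot contain $A_5$ when $|\Cl(KL)|<16$, so each $\Gal(M^{(q)}/K)$ is a central extension of $A_5$; it shows that for odd $q$ (and for $2$-class group of order greater than $2$) this extension has a nontrivial abelian quotient, which the narrow class number one hypothesis forbids; and in the residual order-$2$ case it identifies $\Gal(M^{(2)}/K)$ as $A_5\times C_2$ or $\SL_2(\F_5)$ and cites Kim's paper to eliminate $\SL_2(\F_5)$. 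You instead observe that narrow class number one makes $\Gal(K^f_{ur}/K)$, and hence its quotient $\Gal(H/K)$, perfect, so the central extension is a stem extension and the Schur multiplier bound $H_2(A_5,\Z)\cong\Z/2$ forces $|\Cl(KL)|\le 2$ in one stroke; this absorbs the entire prime-by-prime case analysis (the ``nontrivial abelian quotient'' arguments are the dual statement of perfectness). Your complex-conjugation argument for the final $\SL_2(\F_5)$ case --- the unique involution $-I$ lies in the kernel of $\SL_2(\F_5)\to A_5$, so complex conjugation would act trivially on the totally imaginary field $KL$ --- is explicit and self-contained, whereas the paper defers to cited pages of Kim's article for that step. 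The two proofs are logically parallel, but your perfect-central-extension framing is cleaner and makes the role of the narrow class number hypothesis more transparent.
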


Here we briefly sketch the main elements of the proof, the details of which are given in \cite{Kim-2014}.

\begin{proof}
Let $M$ denote the Hilbert class field of $KL$, and let $M^{(q)}$ denote the Hilbert $q$-class field for a prime $q$.  Every automorphism group of an abelian group of order less than 16 does not contain a subgroup isomorphic to $A_5$, so $\op{Gal}(M^{(q)}/K)$ is a central extension of $A_5$ by $\op{Gal}(M^{(q)}/KL)$.  If $q$ is odd, $\op{Gal}(M^{(q)}/K)$ has a nontrivial abelian quotient \cite[Cor 2.2]{Kim-2014}, contradicting that the narrow class number of $K$ is 1.  Thus the $q$-class group of $KL$ is trivial for odd $q$.

Similarly, $\op{Gal}(M^{(2)}/K)$ has a nontrivial abelian quotient if $\op{Gal}(M^{(2)}/KL)$ has order greater than 2 \cite[Cor 2.3]{Kim-2014}.  Also, if $\op{Gal}(M^{(2)}/KL)$ is trivial, then we are done.  Otherwise, we have $\op{Gal}(M^{(2)}/K)$ isomorphic to $A_5 \times C_2$ or $\op{SL}_2(\mathbb{F}_5)$ \cite[Cor 2.3]{Kim-2014}.  However, $\op{Gal}(M^{(2)}/K)$ can not be isomorphic to $A_5 \times C_2$ because $K$ has narrow class number 1.  Finally, it has been shown \cite[p. 116--118]{Kim-2014} that $\op{Gal}(M^{(2)}/K) \simeq \op{SL}_2(\mathbb{F}_5)$ also leads to a contradiction.
\end{proof}

To prove unconditionally that $KL$ has class number 1, it remains to be shown that the class number is less than 16.

\section{Upper bounds on class numbers of totally complex fields}
The root discriminant $|d_{KL}|^{1/120}$ of $KL$ is approximately 25.5539.  If we assume the generalized Riemann hypothesis, we can use discriminant lower bounds \cite{OdlyzkoTable, Martinet-1980} to show that any totally complex field with degree 480 or larger must have root discriminant larger than 26.48.  But the root discriminant of $KL$ is equal to the root discriminant of its Hilbert class field, so under GRH the class number of $KL$ must be less than $4$.

However, without the assumption of GRH, this method fails for totally complex fields with root discriminant above $4\pi e^\gamma \approx  22.3816$.  To make further progress, we must find another approach that can handle large root discriminants.  Such a method was introduced in \cite{MillerWeber} for totally real fields.  By finding sufficiently many integral elements with small prime norm, an upper bound for class numbers could be established even for fields of large discriminant.

We prove a similar result of totally complex fields.

\begin{theorem}\label{UpperBound}
Let $K$ be a totally complex Galois number field of degree $n$, and let
$$F(x) = \frac{e^{-\left(x/c\right)^2}}{\cosh{\frac{x}{2}}}$$
for some positive constant $c$.  Suppose $S$ is a subset of the prime integers which are unramified in $K$ and factor into principal prime ideals of $K$ of degree $f_p$.  Let
$$B = \gamma + \log{8\pi}  - \log \operatorname{rd}(K) - \int_0^\infty \frac{1-F(x)}{2\sinh{\frac{x}{2}}} \, dx + 2 \sum_{p \in S} \sum_{m=1}^\infty \frac{\log p}{p^{f_p m/2}}F(f_p m \log p),$$
where $\gamma$ is Euler's constant.  If $B > 0$ then we have an upper bound for the class number $h$ of $K$,
$$h < \frac{2c \sqrt{\pi}}{nB}.$$
\end{theorem}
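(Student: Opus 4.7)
The proof will adapt the explicit-formula method of \cite{MillerWeber} from totally real to totally complex fields, by applying Weil's explicit formula to the Dedekind zeta function of the Hilbert class field $H$ of $K$ rather than to $\zeta_K$ itself. The class number $h$ will enter as the multiplicative factor by which the degree grows in passing from $K$ to $H$, which is what ultimately produces the claimed upper bound.

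The first step is to transfer the setup to $H$. Since $K$ is Galois and totally complex, so is $H$, with $[H:\Q]=nh$. Because $H/K$ is unramified, $\op{rd}(H)=\op{rd}(K)$. For each $p\in S$, the hypothesis that $p\O_K$ factors as a product of principal primes of residue degree $f_p$ forces these primes to split completely in $H/K$ (principal ideals lie in the kernel of the Artin map), so $p\O_H$ is the product of exactly $nh/f_p$ prime ideals, each of residue degree $f_p$ over $\Q$. This dictionary between the hypotheses on $K$ and the prime-ideal factorization in $H$ lets the small-prime sum appearing in $B$ materialize as a genuine prime-ideal sum in the explicit formula for $\zeta_H$.

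Next, apply Weil's explicit formula (in the Stark--Odlyzko--Poitou--Serre form) to $\zeta_H$ with the test function $F$. Writing $\Phi(s):=\int_{-\infty}^\infty F(x)e^{(s-1/2)x}\,dx$, the zero contribution $\sum_\rho \Phi(\rho)$ is nonnegative thanks to the special shape of $F$: the factor $1/\cosh(x/2)$ has Fourier transform proportional to $1/\cosh(\pi t)$, which together with the Gaussian factor makes $\Phi$ of positive type throughout the critical strip; grouping zeros into the symmetric quadruples $\{\rho,\bar\rho,1-\rho,1-\bar\rho\}$ forced by the functional equation and by $\zeta_H$ having real coefficients then produces an unconditionally nonnegative sum. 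Two clean evaluations simplify the remaining inequality: $F(0)=1$, and
\begin{align*}
\Phi(0)+\Phi(1) = \int_{-\infty}^\infty F(x)\bigl(e^{x/2}+e^{-x/2}\bigr)\,dx = 2\int_{-\infty}^\infty e^{-(x/c)^2}\,dx = 2c\sqrt\pi,
\end{align*}
while the usual digamma computation shows the archimedean contribution per complex place of $H$ equals $\gamma+\log 8\pi - \int_0^\infty \frac{1-F(x)}{2\sinh(x/2)}\,dx$.

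Assembling these ingredients and discarding the primes of $H$ not lying above some $p\in S$ (they contribute nonnegatively, and so only weaken the resulting lower bound) turns the explicit formula into
\begin{align*}
nh\log\op{rd}(K) \geq nh\!\left(\gamma+\log 8\pi - \int_0^\infty\!\frac{1-F(x)}{2\sinh(x/2)}\,dx\right) + 2nh\!\sum_{p\in S}\sum_{m\geq 1}\frac{\log p}{p^{f_pm/2}}F(f_pm\log p) - 2c\sqrt\pi,
\end{align*}
which rearranges directly to $nhB\leq 2c\sqrt\pi$, and hence to $h<2c\sqrt\pi/(nB)$ whenever $B>0$. The principal obstacle is the careful bookkeeping of the signs in Weil's formula and the unconditional positivity of the zero contribution for this particular $F$; these are by now standard verifications, directly parallel to the totally real analogue carried out in \cite{MillerWeber}.
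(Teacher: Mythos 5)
Your approach is essentially the paper's: apply Poitou's form of Weil's explicit formula to $\zeta_{H(K)}$, use $\op{rd}(H(K))=\op{rd}(K)$ and the complete splitting of principal primes in the Hilbert class field, discard the nonnegative zero contribution and the prime ideals outside $S$, and rearrange to get $nhB\le 2c\sqrt\pi$. The evaluation $\Phi(0)+\Phi(1)=2c\sqrt\pi$ matches the paper's $4\int_0^\infty F(x)\cosh(x/2)\,dx$, and the bookkeeping and conclusion are the same.

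The one place your proposal is materially less careful than the paper is the nonnegativity of $\sum_\rho\Phi(\rho)$. You argue that $1/\cosh(x/2)$ is (up to scaling) its own Fourier transform and the Gaussian has nonnegative Fourier transform, so $\Phi$ is ``of positive type.'' That reasoning, as stated, only gives nonnegativity of $\Phi(\tfrac12+it)$ on the \emph{critical line}, since $\Phi(\tfrac12+it)$ is exactly the Fourier transform of $F$. But the theorem is unconditional, so zeros $\rho$ may lie anywhere in the critical strip, and one must show $\op{Re}\Phi(\sigma+it)\ge 0$ for all $0\le\sigma\le1$. The paper supplies this step differently: it computes $\op{Re}\Phi$ on the two boundary lines $\op{Re}s=0,1$, finding $\op{Re}\Phi=c\sqrt\pi\,e^{-(c\,\op{Im}s/2)^2}>0$ there, notes $\op{Re}\Phi\to 0$ as $|\op{Im}s|\to\infty$, and then invokes the maximum (minimum) principle for the harmonic function $\op{Re}\Phi$ to conclude nonnegativity throughout the strip. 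You should replace your Fourier-positivity remark with this boundary-plus-maximum-principle argument (or some equivalent), since the Fourier transform of $F(x)e^{(\sigma-1/2)x}$ for $\sigma\ne\tfrac12$ is not obviously nonnegative just from the nonnegativity of $\widehat F$.
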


\begin{proof}
The proof is a modification of the argument in \cite{MillerWeber}. We apply Poitou's version \cite{Poitou} of Weil's ``explicit formula" for the Dedekind zeta function of Hilbert class field $H(K)$ of $K$:
\begin{align*}
\log d(H(K)) &= hr_1\frac{\pi}{2} + hn(\gamma + \log 8\pi) - hn \int_0^{\infty} \frac{1-F(x)}{2 \sinh \frac{x}{2}} \, dx - hr_1 \int_0^{\infty} \frac{1-F(x)}{2 \cosh \frac{x}{2}} \, dx \\ &- 4 \int_0^{\infty} F(x) \cosh \frac{x}{2} \, dx + \sum_{\rho} \Phi(\rho) + 2 \sum_{\mathfrak P} \sum_{m=1}^{\infty} \frac{\log N\mathfrak P}{N\mathfrak P^{m/2}}F(m \log N\mathfrak P)
\end{align*}
where $\gamma$ is Euler's constant and $r_1 = 0$ since $K$ is totally complex.  The first sum is over the nontrivial zeros of the Dedekind zeta function of $H(K)$, the second sum is over the prime ideals of $H(K)$, and $\Phi$ is defined by
$$\Phi(s) = \int_{-\infty}^{\infty} F(x) e^{(s - 1/2)x} \, dx.$$

By our choice of $F$, the real part of $\Phi(s)$ is nonnegative everywhere in the critical strip.  Indeed, on the boundary of the critical strip, the real part
$$\op{Re} \Phi(s) = \op{Re} \int_{-\infty}^\infty  \frac{e^{-\left(x/c\right)^2}}{\cosh{\frac{x}{2}}} e^{(s - 1/2)x} \, dx = \int_{-\infty}^\infty e^{-\left(x/c\right)^2} \cos(x \op{Im} s) \, dx = c \sqrt{\pi} e^{-(c \op{Im} s / 2)^2} > 0$$
is positive,  and $\op{Re} \Phi(s) \rightarrow 0$ as $| \op{Im} s| \rightarrow \infty$, so by the maximum modulus principle for harmonic functions, $\op{Re} \Phi(s)$ can not be negative anywhere in the critical strip.

Since the root discriminant $\op{rd}(K)$ of $K$ equals the root discriminant of $H(K)$, we have
$$\log d(H(K)) = hn \log \op{rd}(H(K)) = hn \log \op{rd}(K),$$
and also
$$ 4\int_0^\infty F(x) \cosh{\frac{x}{2}} \, dx = 2c\sqrt{\pi}.$$
We therefore we get the expression
\begin{align*}
hn \log \op{rd}(K) &= hn(\gamma + \log 8\pi) - hn \int_0^\infty \frac{1-F(x)}{2\sinh{\frac{x}{2}}} \, dx \\ &- 2c\sqrt{\pi} + \sum_{\rho} \Phi(\rho) + 2 \sum_{\mathfrak P} \sum_{m=1}^{\infty} \frac{\log N\mathfrak P}{N\mathfrak P^{m/2}}F(m \log N\mathfrak P).
\end{align*}
We rearrange this to get the identity
\begin{equation} \label{eqref:ClassNumberIdentity}
h = \frac{2c\sqrt{\pi}}{n\left[\gamma + \log{8\pi} - \mathcal{G}(F) - \log \operatorname{rd}(K) + \frac{1}{hn} \sum_{\rho} \Phi(\rho) + \frac{2}{hn} \sum_{\mathfrak P} \sum_{m=1}^{\infty} \frac{\log N\mathfrak P}{N\mathfrak P^{m/2}}F(m \log N\mathfrak P)\right]}
\end{equation}
where
$$\mathcal{G}(F) = \int_0^\infty \frac{1-F(x)}{2\sinh{\frac{x}{2}}} \, dx.$$

To get an upper bound for the class number $h$, we need to bound from below the sum over the zeros and the sum over the primes.  The sum $\sum_\rho \Phi(\rho)$ over the critical zeros is nonnegative since the real part of $\Phi(s)$ is nonnegative on the critical strip.  We note that principal ideals in $K$ totally split in the Hilbert class field of $K$.  To find a nontrivial lower bound for the sum over prime ideals of the Hilbert class field, we consider the contribution of the $hn/f_p$ prime ideals $\mathfrak{P}$ of degree $f_p$ that lie over some unramified rational prime $p$:
$$\frac{2}{hn} \sum_{\mathfrak P | p} \sum_{m=1}^{\infty} \frac{\log N\mathfrak P}{N\mathfrak P^{m/2}}F(m \log N\mathfrak P) = 2 \sum_{m=1}^{\infty} \frac{\log p}{p^{f_p m/2}}F(f_p m \log p).$$
Summing this contribution over an arbitrary set $S$ of unramified primes gives a lower bound for the sum over the prime ideals, proving the theorem.
\end{proof}

\section{An integral basis for  $KL$}
In order to apply Theorem \ref{UpperBound}, we must find sufficiently many integral elements of small prime power norm.  To do this, we first must compute a basis of the ring of integers $\mathcal{O}_{KL}$ of $KL$.

\subsection{Computing an integral basis for $KL$}
In general, it is difficult to compute an integral basis for a number field with such large degree and takes an unfeasibly long time using the commonly implemented algorithms. Fortunately, Jordi Gu\`ardia, Jes\'us Montes and Enric Nart studied and recently implemented an algorithm that allows for fast computation of an integral basis. Their algorithm has excellent heuristic running times and low memory requirements. The detailed algorithms are described in \cite{GMN08a}, \cite{GMN08} and \cite{GMN09}.  Even though their algorithms depend on a conjecture, their program verifies that the returned basis indeed generates a maximal order and warns the user if the test fails. The ``Montes package" is available at their homepage \cite{GMN}.

\subsection{Application of Montes package}
Given a number field $F=\Q(\theta)$ defined by an irreducible monic polynomial $f(x)$ with integer coefficients, and a set $S$ of prime divisors of the discriminant of $f$, the Montes package computes an integral basis of the ring of integers $\mathcal{O}_F$.

We must find a polynomial $f(x)$ defining our field $KL$.  Magma \cite{Magma} can use the polynomial (\ref{poly1}) to find the defining polynomial of $KL$, but the coefficients and the discriminant of such polynomial are extremely high.

Instead, to use the Montes package efficiently, we must find a polynomial $f(x)$ with relatively small coefficients.  From the work of Basmaji and Kiming \cite{Basmaji-1994}, it is known that there is a unique $A_5$-extension of $\Q$ which is ramified only at $653$ with ramification index $2$, and so there is a unique $A_5 \times C_2$-extension of $\Q$ ramified only at $653$ with ramification index $2$.  We can also check that the splitting field of
\begin{equation}
\begin{split} \label{poly2}
x^{12} - 2x^{10}+ x^8 - 3x^6 + 2x^4 + 4x^2 + 1
\end{split}
\end{equation}
(see \cite{Kluners}) is an $A_5 \times C_2$-extension of $\Q$, and ramified only at $653$ with ramification two. Therefore the splitting field of (\ref{poly2}) is isomorphic to $KL$, and we find that the defining polynomial $f(x)$ of the splitting field has reasonably-sized coefficients:\\

\scriptsize
\noindent$f(x) = x^{120}  - 176x^{118} + 15344x^{116} - 885072x^{114} + 38020296x^{112} - 1296203136x^{110} +
36449636456x^{108} -  866693813144x^{106} + 17715301473188x^{104} -  314682684335216x^{102} +
 4890917890431160x^{100} - 66745334367421208x^{98}  + 
800278065681872252x^{96} - 
8414962838523553288x^{94} + 77349309250789184324x^{92} -
621279221857410888820x^{90} + 4437488571797416987014x^{88} -
30447148395598893119200x^{86} + 239364400000982953784360x^{84} - \\
2416952862456294977028824x^{82} + 
27290577696141421585067364x^{80} -
289876697936361974353970312x^{78} + \\
 2693760290611273584666952580x^{76} -
 21533137718506216924328804876x^{74} + 148669653143323458431804717338x^{72} -
 907545304473482219783345392936x^{70} + 5232054111925338986843052743996x^{68} - \\
 3239752789658134 6435817828886076x^{66} +
238759214300169626783192907327914x^{64} -\\
 1969821359271537298041753933477564x^{62} + 
 15955962191002947042308447201390658x^{60} - \\
 117632708230681535563853194507331876x^{58} +
 770009787141724893768537360158911141x^{56} - \\
 4460494378285562956504366340932390160x^{54} + 
 22960862707990118711101046634558650520x^{52} -\\
 105831011418214266323798536925298786752x^{50} + 
 441293370897690876129417922335110659940x^{48} -\\
 1684660626364623687429016680021161333136x^{46} + 
 5956123536650235699963217176615899314260x^{44} -\\
 19650574183334889494369045204655644720528x^{42} + 
 60206468968076652065068820738580687974322x^{40} - \\
 166917939895991551434257451990602974231984x^{38} +
 404315032449068740770618799036599277180644x^{36} -\\
 847390146753292599258653510942927227817392x^{34} +
 1580516372974532552131359517512809032793374x^{32} -\\
 2604518872401248539291715628048115698581488x^{30} +
 3304472359602082247414288730787304921769922x^{28} - \\
 2113035564468006137097948259507305236132140x^{26} - 
 2457065881132741431408225647650890813936411x^{24} + 
 2870618480975712690163264954685448459044352x^{22} + 
 47763253056838839358739564013833428094819356x^{20} - 
 177683014438988024196991712904386612981591688x^{18} + 
 259011825112325478055004327935231086308339338x^{16} - 
 586374161577729185199715921998165891986350488x^{14} + 
 1361315854800880637809679080665560920390820298x^{12} - 
 333978908186182311544310995212617151844172044x^{10} - 
 313006045225493493099794359983814771145110707x^8 - 
 2020311228903302658393839253600110446480900072x^6 - 
 1766970645733800499228543786156151879899061482x^4 + 
 2783835141502321145559680957914563686477616612x^2 + 
 6348866967722939861415527094976230039943905289$
\normalsize\\

Now let $a$ be a zero of the above polynomial, i.e.\ $KL=\Q(a)$. We can get an integral basis 
\begin{displaymath}
\mathcal{B}  = (b_1, b_2, \dots, b_{120})
\end{displaymath}
for $KL$ by using the Montes package and Magma software, where each $b_i \in \Q[a]$.  The basis is included as an ancillary file with the arXiv submission of this paper.

\subsection{Finding a better integral basis for $KL$}
Let $\sigma_1, \bar{\sigma}_1, \sigma_2, \bar{\sigma}_2, \dots, \sigma_{60}, \bar{\sigma}_{60}$ be the 60 conjugate pairs of embeddings of $LK$ into the complex numbers.  We have the usual embedding $\iota$ of $KL$ into $\mathbb{R}^{120}$,
\begin{align*}
\iota: KL &\lhook\joinrel\relbar\joinrel\rightarrow \mathbb{R}^{120}\\
x &\longmapsto (\op{Re} \sigma_1(x), \op{Im} \sigma_1(x), \dots, \op{Re} \sigma_{60}(x), \op{Im} \sigma_{60}(x))
\end{align*}
Under this map, ring of integers $\mathcal{O}_{KL}$ is embedded as a lattice in $\mathbb{R}^{120}$.

With some abuse of notation, we respectively define the \emph{multiplicative norms} $N(x)$ of $x \in KL$ and $N(y)$ of $y \in \mathbb{R}^{120}$ to be
$$N(x) = | N_{KL/\mathbb{Q}}(x)|,$$
$$N(y) = \prod_{i=1}^{60} (y_{2i-1}^2 + y_{2i}^2),$$
so that $N(x) = N(\iota(x))$.

In order to find an upper bound for the class number of $KL$, we must find sufficiently many integral elements with small multiplicative norm.  For this purpose, we desire a ``nice" integral basis for $\mathcal{O}_{KL}$ in the sense that embedded basis elements have small Euclidean lengths in $\R^{120}$.  For the basis $\mathcal{B}$ found by the Montes package in the previous subsection, the Euclidean lengths range from $|\iota(b_1)| = \sqrt{60}$ to a rather large  $|\iota(b_{120})| \approx 1.59 \times 10^{70}$, so we do not yet have a ``nice" basis.

We thereby encounter the classical problem of lattice basis reduction.  We can apply the Lenstra-Lenstra-Lov\'asz (LLL) algorithm \cite{LLL}, but it only gets us so far:  A single application of the LLL algorithm in Maple to our initial basis $\iota(\mathcal{B})$ yields a new basis in $\mathbb{R}^{120}$ with Euclidean norms ranging from $\sqrt{60}$  to approximately $64.51$, a substantial improvement yet not good enough for our goal of finding integral elements of small multiplicative norm.  Indeed, a vector in $\mathbb{R}^{120}$ with Euclidean norm of 64.51 can have a multiplicative norm larger than $10^{110}$.

There exist several other methods of lattice basis reduction that can produce shorter vectors than the LLL algorithm at the expense of longer running time, but here we take a somewhat naive approach that is easy to implement in Maple, using its native implementation of the LLL algorithm.

Suppose that, given a list of basis vectors $v_1, v_2, \dots, v_n$ in $\mathbb{R}^n$, we want apply the LLL algorithm to produce a basis of relatively short vectors.  Typically, for lattices of high dimension and vectors of widely varying Euclidean length, the results can be improved if the $v_i$ are first ordered in ascending length prior to applying the LLL algorithm.  This motivates the following approach:
\begin{enumerate}
\item [Step 1:]	Sort the vectors so that the $v_i$ are in ascending length.
\item [Step 2:]	Apply the LLL algorithm to the sorted list of vectors to produce a new list of basis vectors $v_i$.
\item [Step 3:]	Repeat Steps 1 and 2 until the desired shortness of basis vectors is achieved, or until repeating Steps 1 and 2 no longer produces bases with shorter vectors.
\end{enumerate}

Perhaps surprisingly, this simple method works quite well with the integral basis $\mathcal{B}$ of $\mathcal{O}_{KL}$.  As mentioned above, we first applied LLL to $\iota(\mathcal{B})$ to get a new basis.  Then we  applied Steps 1 and 2 repeatedly (in fact, 279 times) until we obtained another basis
$$\mathcal{C} = (c_1, c_2, \dots, c_{120}), \quad c_i \in \mathbb{R}^{120}$$
that has sufficiently short $c_i$.  The $c_i$ have Euclidean norms that range from $|c_1| = \sqrt{60}$ to $|c_{120}| \approx 15.30$.  The inverse of change of basis matrix to get from $\iota(\mathcal{B})$ to $\mathcal{C}$ can be found as an ancillary file with the arXiv submission of this paper.

\section{Finding elements of $\mathcal{O}_{KL}$ with small multiplicative norm}
Our next step is to find integral elements of our number field $KL$ with small multiplicative norm.  Then we can apply the theorem from \cite{MillerWeber} to establish an upper bound for the class number.  To find such elements of small norm, we will search over ``sparse" vectors, i.e.\ vectors where almost all the coefficients are zero with respect to the basis $(c_j)$.

Table \ref{TablePrimeNorm} lists the elements of small prime norm that are found by searching over ``sparse" vectors.  These prime integers generate principal ideals which totally split in $KL$ into 120 principal prime ideals, each of which is generated by the given element or one of its Galois conjugates.  Therefore we can include the primes given in Table \ref{TablePrimeNorm} as degree 1 primes in the set $S$ used by Theorem \ref{UpperBound}.

\begin{center}
\begin{table}
\caption{Generators of some small degree 1 primes in $\mathcal{O}_{KL}$}
\label{TablePrimeNorm}
\begin{tabular}{| c | c | c | }
\hline
  Element & Norm \\ \hline
  $c_2 + c_{17} + c_{46}$ 		& $3571$ \\
  $c_3 - c_9 + c_{48}$           	& $5477$ \\
  $c_{14} - c_{41}$ 			& $7499$ \\
  $c_1 - c_{48} + c_{64}$		& $8867$ \\
  $c_{11} + c_{75}$ 		  	& $15679$ \\
  $c_2 - c_{54} + c_{70}$     	& $17203$ \\
  $c_3 + c_{12} - c_{100}$        	& $20047$ \\
  $c_{23} + c_{104}$		       	& $25343$ \\
  $c_{25} - c_{74} $   	   		& $31477$ \\
  $c_{10} - c_{21} + c_{97} $   	& $34613$ \\
  $c_{49} - c_{80} $		         	& $35537$ \\
\hline
\end{tabular}
\quad
\begin{tabular}{| c | c | c | }
\hline
  Element & Norm \\ \hline
  $c_{71} + c_{74} $ 		         	& $43787$ \\
  $c_{30} + c_{62} $    		& $44879$ \\
  $c_1 - c_{17} + c_{77} $   	& $45361$ \\
  $c_{95} $			   	& $46271$ \\
  $c_2 - c_{28} - c_{62} $   	& $48341$ \\
  $c_{23} - c_{53} + c_{85} $   	& $54311$ \\
  $c_2 + c_{31} + c_{76} $   	& $95327$ \\
  $c_{36} + c_{49} $     		& $111611$ \\
  $c_3 + c_{22} + c_{66} $   	& $113081$ \\
  $c_{23} - c_{62} $		         	& $137927$ \\
  $c_7 - c_{89} $		   	& $139999$ \\
\hline
\end{tabular}
\end{table}
\end{center}

Table \ref{TableHigherDegree} lists some elements which have norms of small prime power.  However, this is not useful to us unless we can show that the ideals generated by the rational primes factor in $KL$ into degree 2 or 3 primes, rather than totally split.  Since $KL$ is the splitting field of
$$x^{12} - 2x^{10}+ x^8 - 3x^6 + 2x^4 + 4x^2 + 1,$$
it suffices and is straightforward to check that this polynomial does not totally split mod $p$ for each of the primes in Table \ref{TableHigherDegree}.  Therefore each of these primes can be included in set $S$ as degree 2 primes, except for 11 which is included as degree 3.

\begin{center}
\begin{table}
\caption{Generators of some small degree 2 and degree 3 primes in $\mathcal{O}_{KL}$}
\label{TableHigherDegree}
\begin{tabular}{| c | c | c | }
\hline
  Element & Norm \\ \hline
  $c_9 + c_{62}$	 		& $13^2$ \\
  $c_{71}$		 			& $83^2$ \\
  $c_{94}$					& $89^2$ \\
  $c_4 + c_{17} - c_{76} $	        	& $137^2$ \\
  $c_8 - c_{62} $ 		         	& $227^2$ \\
\hline
\end{tabular}
\quad
\begin{tabular}{| c | c | c | }
\hline
  Element & Norm \\ \hline
  $c_{69}$			    		& $229^2$ \\
  $c_{22} + c_{25} $    		& $251^2$ \\
  $c_{24} - c_{75} $ 		         	& $383^2$ \\
  $c_7 + c_{42} $ 		         	& $433^2$ \\
  $c_1 - c_7 + c_{68} $    		& $11^3$ \\
\hline
\end{tabular}
\end{table}
\end{center}

However, we have still not found enough principal prime ideals to establish a sufficiently strong upper bound for the class number of $KL$.  To find more principal prime ideals, we consider elements that generate principal ideals that are products of two prime ideals, one of which is already known to be principal.  Essentially, we are just finding relations in the class group.  Table \ref{TableComposite} list some integral elements and their norms.  Consider, for example, the element $c_2 + c_{65}$, which has norm $13^2 \cdot 19^2$.  Since we also know that $c_9 + c_{62}$ generates a degree 2 prime ideal of norm $13^2$, we can divide $c_2 + c_{65}$ by the appropriate Galois conjugate of $c_9 + c_{62}$ to find an integral element of norm $19^2$.  Since $19$ does not totally split in $KL$, we know that $19$ factors into degree 2 principal prime ideals.  Similarly, we can show that 109 factors into degree 2 principal prime ideals, and the primes 7 and 23 factor into degree 3 principal prime ideals.  Also we can show that the following primes totally split in $KL$ into degree 1 principal prime ideals:
$$6361, 10753, 11681, 12619, 16561, 19963, 23431,$$
$$23531, 32309, 33403, 41621, 48179, 56359, 58601.$$

\begin{center}
\begin{table}
\caption{Generators of some composite ideals in $\mathcal{O}_{KL}$}
\label{TableComposite}
\begin{tabular}{| c | c | c | }
\hline
  Element & Norm \\ \hline
  $c_2 + c_{65}$	 		& $13^2 * 19^2$ \\
  $c_9 - c_{89}$	 		& $13^2 * 7^3$ \\
  $c_{24} - c_{70}$	 		& $13^2 * 6361$ \\
  $c_{35} + c_{91}$	 		& $13^2 * 10753$ \\
  $c_{68} + c_{93}$	 		& $13^2 * 11681$ \\
  $c_{18} + c_{91}$	 		& $13^2 * 109^2$ \\
  $c_3 + c_{39} - c_{74}$	 	& $13^2 * 23^3$ \\
  $c_1 + c_{68} + c_{78}$	 	& $19^2 * 12619$ \\
  $c_2 - c_{21} + c_{44}$	 	& $13^2 * 16561$ \\
\hline
\end{tabular}
\quad
\begin{tabular}{| c | c | c | }
\hline
  Element & Norm \\ \hline
  $c_{24} - c_{80}$		 	& $13^2 * 19963$ \\
  $c_1 + c_{65} + c_{77}$		& $19^2 * 23431$ \\
  $c_{53} - c_{89}$			& $19^2 * 23531$ \\
  $c4 + c_{25} - c_{61}$		& $13^2 * 32309$ \\
  $c_{80}$					& $31147 * 33403$ \\
  $c_{15} + c_{100}$			& $13^2 * 41621$ \\
  $c_1 - c_{24} + c_{72}$		& $13^2 * 48179$ \\
  $c_{61} + c_{79}$			& $13^2 * 56359$ \\
  $c_{56} - c_{71}$			& $13^2 * 58601$ \\
\hline
\end{tabular}
\end{table}
\end{center}

\section{An upper bound for the class number of $KL$}
We are now in a position to prove our main result by applying Theorem \ref{UpperBound} and Corollary \ref{Cor}.

\begin{proof}[Proof of Theorem \ref{MainResult}]
By searching for elements of small norm, and taking quotients where necessary, we find a number of primes that can be included in the set $S$ of unramified primes that factor into principal prime ideals.  In particular, the following 36 primes totally split into degree 1 principal primes in $KL$:
$$3571, 5477, 6361, 7499, 8867, 10753, 11681, 12619, 15679, 16561, 17203, 19963, 20047,\\$$
$$23431, 23531, 25343, 31477, 32309, 33403, 34613, 35537, 41621, 43787, 44879, 45361,\\$$
$$46271, 48179, 48341, 54311, 56359, 58601, 95327, 111611, 113081, 137927, 139999.$$
Also, the following 11 primes factor into degree 2 principal primes in $KL$:
$$13, 19, 83, 89, 109, 137, 227, 229, 251, 383, 433.$$
Finally, the following three primes factor into degree 3 principal primes in $KL$:
$$7,11,23.$$
If we include these primes in our set $S$ and set $c=24.5$, we find that
$$2 \sum_{p \in S} \sum_{m=1}^\infty \frac{\log p}{p^{f_p m/2}}F(f_p m \log p) > 0.18797.$$
We can numerical calculate the integral and find that
$$\int_0^\infty \frac{1-F(x)}{2\sinh{\frac{x}{2}}} \, dx < 0.70010.$$
Since the root discriminant of $KL$ is $\sqrt{653}$, we have:
\begin{align*}
B &= \gamma + \log{8\pi}  - \log \operatorname{rd}(KL) - \int_0^\infty \frac{1-F(x)}{2\sinh{\frac{x}{2}}} \, dx + 2 \sum_{p \in S} \sum_{m=1}^\infty \frac{\log p}{p^{f_p m/2}}F(f_p m \log p)\\
&> 0.57721 + 3.22417 - 3.24079 - 0.70010 + 0.18797 = 0.04846.
\end{align*}
Therefore, by Theorem \ref{UpperBound}, we get an upper bound for the class number of $KL$:
$$h_{KL} < \frac{2c\sqrt{\pi}}{nB} < \frac{2 \times 24.5 \sqrt{\pi}}{120 \times 0.04846} < 14.94.$$
Since the class number is an integer, we deteremine that
$$h_{KL} \leq 14.$$
Applying Corollary \ref{Cor}, we conclude that the class number of $KL$ is 1.
\end{proof}

\section{Summary of results and concluding remarks}
We have shown conditionally that the class number of $KL$ is one. Thus we know that $KL$ does not have any solvable unramified extensions. To prove unconditionally that the \'etale fundamental group of $K = \Q(\sqrt{653})$ is $A_5$, the remaining task is to show that $KL$ does not have a nonabelian simple unramified extension.

How might we rule out such nonabelian simple unramified extensions?  There are at least two possible approaches:  One is to gain sufficient knowledge of the prime ideals of the unramified extension in order to bound the degree of the extension, similar to our approach for bounding the degree of Hilbert class field.  But this is difficult:  We do not have access to a class field theory for nonabelian extensions as we have in the abelian case.  A possible alternative approach may be to use the theory of Galois representations to attempt to rule out possible nonabelian unramified extensions of $KL$.

\section{Acknowledgments}
The authors would like to thank Takeshi Tsuji and Hiroki Takahashi, the organizers of the 2014 RIMS Workshop on Algebraic Number Theory where the authors had the opportunity be introduced.

\medskip


\begin{thebibliography}{99}

\bibitem{Basmaji-1994}
J. Basmaji, I. Kiming, \emph{A table of $A_5$ fields}, On Artin's Conjecture for Odd 2-dimensional Representations, 37--46, 122--141, Lecture Notes in Math. 1585, Springer, Berlin (1994).  MR1322317 (96e:11141).  \url{http://dx.doi.org/10.1007/BFb0074106}

\bibitem{Magma}
W. Bosma, J. Cannon, and C. Playoust, \emph{The Magma algebra system I:  The user language}, J. Symbolic Comput. 24 (1997), no. 3-4, 235--265.   MR1484478.  \url{http://dx.doi.org/10.1006/jsco.1996.0125}

\bibitem{Cais-2005}
B. Cais, \emph{Serre's Conjectures} (2005, revised 2009), \url{http://math.arizona.edu/~cais/Papers/Expos/Serre05.pdf}.

\bibitem{GMN08a}
J. Gu\`ardia, J. Montes, E. Nart,  \emph{Higher Newton polygons in the computation of discriminants and prime ideal decomposition in number fields}, J. Th\'eor. Nombres Bordeaux 23 (2011), no. 3, 667--696.  MR2861080.  \url{http://dx.doi.org/10.5802/jtnb.782}

\bibitem{GMN08} 
J. Gu\`ardia, J. Montes, E. Nart,  \emph{Newton polygons of higher order in algebraic number theory}, Trans. Amer. Math. Soc. 364 (2012), no. 1, 361--416.  MR2833586 (2012k:11185).  \url{http://dx.doi.org/10.1090/S0002-9947-2011-05442-5}

\bibitem{GMN09} 
J. Gu\`ardia, J. Montes, E. Nart,  \emph{Higher Newton polygons and integral bases}, J. Number Theory 147 (2015), 549--589.  MR3276340.   \url{http://dx.doi.org/10.1016/j.jnt.2014.07.027}

\bibitem{GMN} 
J. Gu\`ardia, J. Montes, E. Nart,  \emph{The Montes Package Web Page}.  \url{http://www-ma4.upc.edu/~guardia/MontesAlgorithm.html}

\bibitem{Kluners}
J. Kl\"uners, G. Malle, \emph{A Database for Number Fields}.  \url{http://galoisdb.math.upb.de/home}

\bibitem{Kim-2014}
K. Kim, \emph{A construction of nonabelian simple \'etale fundamental groups}, Ramanujan J. 35 (2014), no. 1, 111--120.  MR3258601.  \url{http://dx.doi.org/10.1007/s11139-014-9570-y}

\bibitem{LLL}
A.\,K. Lenstra, H.\,W. Lenstra Jr., L. Lov\'asz, \emph{Factoring polynomials with rational coefficients}, Mathematische Annalen 261 (4), 515--534.  MR0682664.  \url{http://dx.doi.org/10.1007/BF0157454}

\bibitem{Martinet-1980}
J. Martinet, \emph{Petits discriminants des corps de nombres}, Journ\'ees Arithm\'etiques 1980, 151--193,  London Math. Soc. Lecture Note Ser. 56, Cambridge Univ. Press, Cambridge (1982).  MR0697261 (84g:12009).  \url{http://dx.doi.org/10.1017/CBO9780511662027.008}

\bibitem{MillerWeber}
J.\,C. Miller, \emph{Class numbers of totally real fields and applications to the Weber class number problem},  Acta Arith 164 (2014), no. 4, 381--397. \url{http://dx.doi.org/10.4064/aa164-4-4}

\bibitem{MillerPrime}
J.\,C. Miller,  \emph{Real cyclotomic fields of prime conductor and their class numbers}, Math. Comp. 84 (2015), 2459--2469 .  \url{http://dx.doi.org/10.1090/S0025-5718-2015-02924-X}

\bibitem{OdlyzkoTable}
A.\,M.\;Odlyzko, \emph{Table 3: GRH bounds for discriminants}, \url{http://www.dtc.umn.edu/~odlyzko/unpublished/discr.bound.table3}

\bibitem{Poitou}
G. Poitou, \emph{Sur les petits discriminants}, S\'eminaire Delange-Pisot-Poitou, Th\'eorie des nombres 18, no. 1 (1976--1977), Exp. No. 6, 1--17.  \url{http://www.numdam.org/item?id=SDPP_1976-1977__18_1_A6_0}

\bibitem{Serre}
J.-P. Serre, \emph{Modular forms of weight one and Galois representations}, Algebraic number
fields: L-functions and Galois properties (Proc. Sympos., Univ. Durham, Durham, 1975), 193--268, Academic Press, London (1977).  MR0450201 (56 \#8497).

\bibitem{Yamamura-1997}
K. Yamamura, \emph{Maximal unramified extensions of imaginary quadratic number fields of small conductors}, J. Th\'eor. Nombres Bordeaux 9 (1997), no. 2, 405--448.  MR1617407 (99e:11139).  \url{http://dx.doi.org/10.5802/jtnb.211}

\end{thebibliography}
\end{document}